\newenvironment{pf*}[1]{\proof[#1]}{\endproof}
\newtheorem{Theorem}[equation]{Theorem}
\newtheorem*{Theorem2}{Theorem}
\newtheorem{Corollary}[equation]{Corollary}
\newtheorem{Lemma}[equation]{Lemma}
\newtheorem{Proposition}[equation]{Proposition}
\theoremstyle{definition}
\newtheorem{Definition}[equation]{Definition}
\newtheorem{Example}[equation]{Example}
\newtheorem{Convention}[equation]{Convention}
\theoremstyle{remark}
\newtheorem{Exercise}[equation]{Exercise}
\newtheorem{Remark}[equation]{Remark}
\numberwithin{equation}{section}
\numberwithin{figure}{section}
\newcommand{\PP}{{\mathbb P}}
\newcommand{\C}{{\mathbb C}}
\newcommand{\Z}{{\mathbb Z}}
\newcommand{\N}{{\mathbb N}}
\newcommand{\mc}[1]{\mathcal{#1}} 
\newcommand{\ms}[1]{\mathscr{#1}} 
\newcommand{\beq}{\begin{equation}}
\newcommand{\eeq}{\end{equation}}
\newcommand{\mt}[1]{\text{#1}}
\numberwithin{equation}{section}
\numberwithin{figure}{section}
\newcommand{\bdfn}{\begin{Definition}}
\newcommand{\edfn}{\end{Definition}}
\newcommand{\brem}{\begin{Remark}}
\newcommand{\erem}{\end{Remark}}
\newcommand{\benum}{\begin{enumerate}}
\newcommand{\eenum}{\end{enumerate}}
\newcommand{\bexam}{\begin{Example}}
\newcommand{\eexam}{\end{Example}}
\newcommand{\bexer}{\begin{Exercise}}
\newcommand{\eexer}{\end{Exercise}}
\newcommand{\bthm}{\begin{Theorem}}
\newcommand{\ethm}{\end{Theorem}}
\newcommand{\blem}{\begin{Lemma}}
\newcommand{\elem}{\end{Lemma}}
\newcommand{\bprop}{\begin{Proposition}}
\newcommand{\eprop}{\end{Proposition}}
\newcommand{\bcor}{\begin{Corollary}}
\newcommand{\ecor}{\end{Corollary}}
\def\blue#1{{\color{blue}{#1}}}
\title{Weak Order on Complete Quadrics}
\author{Mahir Bilen Can\\ Michael Joyce}
\begin{document}

\maketitle

\begin{abstract}
Using an action of the Richardson-Springer monoid on involutions, we study the weak order on the variety of complete quadrics.  
Maximal chains in the poset are explicitly determined.  Applying results of Brion, our calculations describe certain cohomology 
classes in the complete flag variety.
\end{abstract}

\section{Introduction}\label{sec:intro}

Let $G$ be a semi-simple algebraic group over an algebraically closed field of characteristic $\neq 2$ and 
let $B \subseteq G$ denote a Borel subgroup.
A $G$-variety $Y$ is {\em spherical} if $Y$ has finitely many $B$-orbits. 
Among the important spherical varieties are generalized flag varieties,
reductive algebraic monoids, symmetric spaces and their equivariant embeddings.

Let $B(Y)$ denote the finite set of all closed $B$-stable subvarieties of $Y$.
For spherical varieties, $B(Y)$ consists of the closures of $B$-orbits in $Y$.
There are two geometrically natural poset structures to place on $B(Y)$:

1. The Bruhat-Chevalley order, which is defined by $Y_1 \leq_B Y_2 \Leftrightarrow Y_1 \subseteq Y_2$.
Introduced by Ehressman in \cite{Ehresman34} when $Y$ is a homogenous space,
the literature on $\leq_B$ is vast. The case of a symmetric space is investigated by Richardson and Springer in
\cite{RS90, RS94}, and the combinatorics of the special case of the variety of smooth quadrics is investigated by Incitti \cite{Incitti04}.
The Bruhat-Chevalley order on algebraic monoids is investigated by Putcha and Renner
in \cite{Renner86}, \cite{PPR97}, \cite{Putcha02}, \cite{Putcha04}.  Other recent work includes \cite{CR11} and \cite{BC}.

2. The weak order on $B(Y)$ is defined by its covering relations: $Y_1$  is covered by $Y_2$ if and only if $Y_2 = P Y_1$
for some minimal parabolic subgroup $P \subseteq G$. In this case, $Y_1$ is covered by $Y_2$
in the Bruhat-Chevalley order; hence, the weak order is weaker than $\leq_B$. This explains the nomenclature.
The weak order on a general spherical variety is studied by Knop \cite{Knop95}, Brion \cite{Brion98, Brion01} and Springer \cite{Springer04}.

Unlike Bruhat-Chevalley order, the weak order does not ``mix'' $G$-orbits, in the sense that if $Y_1 \leq Y_2$ in the weak order,
and $\mc{O}_1, \mc{O}_2$ are the dense $B$-orbits of $Y_1, Y_2$, respectively, then $\mc{O}_1$ and $\mc{O}_2$ lie in the same $G$-orbit.
Thus, it suffices to study the weak order for homogeneous varieties.  For varieties that are not homogeneous, information about the
Bruhat-Chevalley order can be recovered from the knowledge of the weak order of each of the $G$-orbits, as shown in \cite{Brion98, Brion01, Springer04}.
In fact, the weak order on symmetric spaces is already considered in \cite{RS90, RS94} as a tool to understand the Bruhat-Chevalley order.

Let $(W,S)$ denote the Weyl group of $G$, with $S$ the reflections associated to the simple positive roots of the pair $(G,B)$.
Suppose that $Y$ is a homogeneous spherical variety. Then $B(Y)$ always has a unique maximal element, namely $Y$ itself.
For $Y' \in B(Y)$, the $W$-set of $Y'$, written
$W(Y')$, consists of all $w \in W$ of length $\text{codim}(Y')$, such that for some reduced expression $w = s_1 s_2 \cdots s_l$, $s_i \in S$,
\beq\label{eqn:W-set def}
Y = P_{s_1} P_{s_2} \cdots P_{s_l} Y'.
\eeq
In that case, (\ref{eqn:W-set def}) holds for every reduced expression of $w$.
If $B(Y)$ has a unique minimal element $Y_0$, then
the maximal chains in $B(Y)$ correspond to the reduced expressions of elements in $W(Y_0)$.

The principal result of this paper is the description of the $W$-sets for the unique minimal element in $B(\mc{O})$ for each $G$-orbit
$\mc{O}$ of the variety of complete quadrics. To explain our results, we recall the structure of the variety of complete quadrics.
For details, we refer the reader to the seminal article \cite{DP83}.

Let $(G,\sigma)$ be a pair consisting of a simply-connected, semi-simple algebraic group $G$
and an involution $\sigma: G\rightarrow G$. Let $H$ denote the normalizer of the fixed subgroup $G^\sigma$ of $\sigma$.
The {\em wonderful embedding} $X$ of the pair $(G,\sigma)$ is a certain smooth projective $G$-variety containing an open
$G$-orbit isomorphic to $G/H$ whose boundary is a union of smooth $G$-stable divisors with smooth
transversal intersections.  The boundary divisors are canonically indexed by the elements of a certain subset $\varDelta$
of a root system associated to $(G,\sigma)$. Each $G$-orbit in $X$ corresponds to a subset $I\subseteq \varDelta$.
The Zariski closure of the orbit is smooth and is equal to the transverse intersection of the boundary divisors corresponding to
the elements of $I$.

The variety of complete quadrics, which we denote by $\ms{X}=\ms{X}_n$, is the wonderful embedding of the pair $(\mt{SL}_n,\sigma)$,
where $\sigma(A) = (A^{\top})^{-1}$. In this case, $\varDelta$ is the set of simple roots associated to $\mt{SL}_n$
relative to its maximal torus of diagonal matrices contained in the Borel subgroup $B\subseteq \mt{SL}_n$
of upper triangular matrices. For combinatorial purposes, it is convenient and natural to identify $\varDelta$ with the set $[n-1] = \{1, 2, \dots, n-1\}$.

Recall that a composition of $n$ is an ordered sequence $\mu=(\mu_1,\dots, \mu_k)$ of positive integers that sum to $n$.
The compositions of $n$ are in bijection with the subsets of $[n-1]$ via
\begin{equation}\label{eqn:mu to I}
\mu =(\mu_1,\dots, \mu_k) \leftrightarrow \{ \mu_1, \mu_1+\mu_2,\dots, \mu_1+\cdots + \mu_{k-1} \},
\end{equation}
yielding an equivalent parameterization of the $G$-orbits of $\ms{X}$.
The $G$-orbit associated with the composition $\mu$ is denoted by $\mc{O}_{\mu}$.
The composition $\mu = (n)$ corresponds to the $G$-orbit of smooth quadrics in $\PP^{n-1}$, while the composition
$\mu = (1,1,\dots,1)$ corresponds to the unique closed $G$-orbit, isomorphic to the variety of complete flags in $\C^n$.

Let $\mt{S}_n$ denote the set of permutations on $[n]$. We denote by $\mt{I}_n$ the set of involutions in $\mt{S}_n$.
The $B$-orbits of $\ms{X}$ lying in $\mc{O}_{(n)}$ are parametrized by $\mt{I}_n$.
More generally, the $B$-orbits in $\mc{O}_{\mu}$ are parameterized by
combinatorial objects that we call {\em $\mu$-involutions}.
Concisely, a $\mu$-involution is a permutation of the set $[n]$ written in one-line notation and partitioned into strings by $\mu$, so that each string is an involution with respect to the relative ordering of its numbers.
For example, $[26|8351|7|94]$ is a $(2,4,1,2)$-involution and the string $8351$ is equivalent to the involution $4231$.
We denote by $\mt{I}_\mu$ the set of $\mu$-involutions.
The identity $\mu$-involution, whose entries are given in the increasing order, is the representative of the dense $B$-orbit
in the $G$-orbit $\mc{O}_\mu$.

The Richardson-Springer monoid $M(\mt{S}_n)$ is the finite monoid generated by the simple transpositions $S=\{s_1,\dots, s_{n-1}\} \subset \mt{S}_n$
subject to the following relations: $s_i^2= s_i$ for all $s_i\in S$ and $s_i s_{i+1} s_{i} = s_{i+1} s_i s_{i+1}$ for $i=1,\dots, n-2$.

We define an action of $M(\mt{S}_n)$ that is the reverse of the action associated to the weak order.
To this end, let $\pi$ be a $\mu$-involution and let $s_i \in S$ be a simple transposition. Then

\begin{enumerate}
\item If $\pi^{-1}(i) > \pi^{-1}(i+1)$, then $s_i \cdot \pi = \pi$.
\item If $\pi^{-1}(i) < \pi^{-1}(i+1)$ and the values $i$ and $i+1$ occur
in different strings of $\pi$, then $s_i \cdot \pi = s_i \pi$.
\item Otherwise, if $\pi^{-1}(i) < \pi^{-1}(i+1)$ and the values
$i$ and $i+1$ occur in the same string $\alpha_j$ of $\pi$, then the action splits into two further subcases.
\begin{enumerate}
\item If $\alpha_j$ fixes $i$ and $i+1$, then $s_i \cdot \pi = [\alpha_1| \dots| \alpha_{j-1}| s_i \alpha_{j}| \alpha_{j+1}| \dots| \alpha_k]$.
\item Otherwise, $s_i \cdot \pi = [\alpha_1| \dots| \alpha_{j-1}| s_i \alpha_j s_i| \alpha_{j+1}| \dots| \alpha_k]$.
\end{enumerate}
\end{enumerate}

\bexam
Let $\mu=(3,1,2,1)$ and $\pi = [314|6|27|5]$.
\begin{eqnarray*}
s_1 \cdot \pi = [324|6|17|5], & s_4 \cdot \pi = [315|6|27|4], \\
s_2 \cdot \pi = [314|6|27|5], & s_5 \cdot \pi = [314|6|27|5], \\
s_3 \cdot \pi = [431|6|27|5], & s_6 \cdot \pi = [314|7|26|5]. \\
\end{eqnarray*}
\eexam

It is a routine matter to check that the above action, defined for simple reflections only, extends uniquely to an action of $M(\mt{S}_n)$.
Thereby, we define the {\em reverse weak order} on $\mt{I}_{\mu}$:
$$
\pi \leq \pi' \Leftrightarrow \pi' = w \cdot \pi\ \text{for some}\ w \in M(\mt{S}_n).
$$
The weak order on $\mu$-involutions is the {\em opposite} of the weak order on the corresponding $B$-orbit closures.
That is, $\pi \leq \pi' \Leftrightarrow \overline{\mc{O}}_{\pi'} \leq \overline{\mc{O}}_{\pi}$.

Next, we briefly describe our results on the maximal chains of the weak order and give an overview of
our manuscript. There are four sections including this introduction. In the next section we recollect some of the basic definitions and lemmas regarding posets and complete quadrics.

Notwithstanding that it is a special case, the $G$-orbit of smooth quadrics is the base case of what follows.
Thus, we devote Section \ref{sec:weak order involutions} to the study of the maximal chains of the reverse weak order on involutions, only.
In particular, we determine a set $\mt{D}_n$ of permutations which determines the maximal chains of the weak order on
$\mt{I}_n$.
Precisely, $\mt{D}_n$ is the set of all $w \in \mt{S}_n$ such that for each $1 \leq i \leq n/2$, $w^{-1}(n+1-i) < w^{-1}(i)$
and there does not exist $i < j < n + 1 - i$ such that $w^{-1}(n+1-i) < w^{-1}(j) < w^{-1}(i)$.
It follows that
$$
\# \mt{D}_n=(n-1)!! = (n-1)(n-3)(n-5) \cdots.
$$
The reduced expressions of the elements of $\mt{D}_n$ provide labelings for the maximal chains on $\mt{I}_n$.

In Section \ref{sec:weak order mu-strings} we extend the results of the previous section to an arbitrary $G$-orbit.  For $w \in \mt{S}_n$, let $\ell(w)$ denote the number of pairs $i<j$ such that $w(i) > w(j)$.
For a composition $\mu=(\mu_1,\mu_2,\dots,\mu_k)$ of $n$, let $\mt{S}_\mu \subseteq \mt{S}_n$ denote the parabolic subgroup
$\mt{S}_\mu = \mt{S}_{\mu_1} \times \mt{S}_{\mu_2} \times \cdots \times \mt{S}_{\mu_k}$.
The reverse weak order is a graded poset and the rank function $\ell_\mu$ on $\mt{I}_\mu$ is given by
\begin{align}\label{A:length for I_mu}
\ell_\mu (\pi) := \min_{w \in \mt{S}_\mu} \ell(w \pi) + \sum_{i=1}^k \frac{\ell(\alpha_i) + \text{exc}(\alpha_i)}{2}, \
\pi = [\alpha_1 | \alpha_2 | \dots | \alpha_k] \in \mt{I}_\mu,
\end{align}
where $\mt{exc}(w)$ is the excedance of $w$, the number of indices such that $w(i) > i$.
Note that in (\ref{A:length for I_mu}), we view $\alpha_i$ as a permutation with respect to the relative ordering of its entries.

Let $e_\mu$ denote the minimal element of $I_\mu$ in the reverse weak order.
The main object of study of our manuscript, namely, the $W$-set of a $\mu$-involution $\pi \in \mt{I}_\mu$ is
$$
W(\pi) := \{ w \in \mt{S}_n : w \cdot e_\mu = \pi \text{ and } \ell(w) = \ell_\mu(\pi) \}.
$$
Let $\mt{D}_{\mu}$ be the set consisting of permutations $w \in \mt{S}_n$ whose $i$-th string uses the alphabet consisting of
integers $j$ such that $n - \sum_{s=1}^i \mu_s < j \leq n - \sum_{s=1}^{i-1} \mu_{s-1}$ and has its associated 
permutation in $\mt{D}_{\mu_i}$.
Our second main result, which we prove in Section \ref{sec:weak order mu-strings} is the following


\begin{Theorem}\label{T:main}
Let $\pi_{0,\mu}$ be the maximal element in the reverse weak order on $\mt{I}_\mu$. Then $W(\pi_{0,\mu})=\mt{D}_\mu$.
\end{Theorem}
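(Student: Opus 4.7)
The plan is to reduce Theorem~\ref{T:main} to the single-orbit result of Section~\ref{sec:weak order involutions} via the parabolic factorization $\mt{S}_n = \mt{S}_n^\mu \cdot \mt{S}_\mu$, where $\mt{S}_n^\mu$ denotes the set of minimum-length coset representatives of $\mt{S}_n / \mt{S}_\mu$. I begin by identifying $\pi_{0,\mu}$ explicitly as the unique $\mu$-involution whose $i$-th string uses the alphabet $A_i = \{n - \sum_{s \le i}\mu_s + 1, \dots, n - \sum_{s < i}\mu_s\}$ and realizes on $A_i$ the longest involution of $\mt{I}_{\mu_i}$. To verify that this element is indeed the maximum in the reverse weak order, I observe that every simple reflection $s_i$ falls into case~(1): if $i$ and $i+1$ lie in different strings, the alphabet condition forces $\pi_{0,\mu}^{-1}(i) > \pi_{0,\mu}^{-1}(i+1)$, and if they lie in the same string, the within-string longest involution produces the same inequality.

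For the inclusion $\mt{D}_\mu \subseteq W(\pi_{0,\mu})$, take $w \in \mt{D}_\mu$ and write its parabolic factorization $w = vu$ with $v \in \mt{S}_n^\mu$ and $u \in \mt{S}_\mu$, so $\ell(w) = \ell(v) + \ell(u)$. The definition of $\mt{D}_\mu$ pins down $v$ as the specific minimum coset representative sending the $i$-th block of positions to $A_i$ in increasing order, and forces $u = u_1 \times \cdots \times u_k$ in $\mt{S}_{\mu_1} \times \cdots \times \mt{S}_{\mu_k}$ with each $u_i \in \mt{D}_{\mu_i}$. I then evaluate $w \cdot e_\mu$ using the concatenated reduced expression in which the $u$-part is applied first: each reflection coming from $u$ stays within a single block and, by the theorem of Section~\ref{sec:weak order involutions} applied to each block independently, triggers case~(3) to push that block from its identity involution to its longest involution; the subsequent $v$-part triggers case~(2) throughout, shuffling the alphabets into $A_1, \dots, A_k$ without disturbing the within-string involutions already built, producing $\pi_{0,\mu}$. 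The equality $\ell(w) = \ell_\mu(\pi_{0,\mu})$ follows by comparing with (\ref{A:length for I_mu}).

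For the reverse inclusion $W(\pi_{0,\mu}) \subseteq \mt{D}_\mu$, take $w \in W(\pi_{0,\mu})$ with parabolic factorization $w = vu$. Since the alphabets of $\pi_{0,\mu}$ are fixed, the coset $w\mt{S}_\mu$ is determined, forcing $v$ to equal the minimum coset representative identified above; in particular $\ell(v)$ matches the first summand of (\ref{A:length for I_mu}). Subtracting, $\ell(u)$ equals $\sum_i (\ell(\alpha_i^{\max}) + \mt{exc}(\alpha_i^{\max}))/2$, and decomposing $u = u_1 \cdots u_k$, each $u_i$ reaches the longest involution in $\mt{I}_{\mu_i}$ with minimum length, hence $u_i \in \mt{D}_{\mu_i}$ by Section~\ref{sec:weak order involutions}. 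The principal obstacle is the forward-direction claim that applying $v$'s reduced expression to the post-$u$ state triggers case~(2) at every step: since the monoid action's case distinctions are state-dependent, this requires a careful inductive analysis exploiting the facts that the last factor of any reduced expression of a minimum coset representative is a boundary simple reflection, and that the specific intermediate state (longest involution on each original alphabet) keeps each successive swap between distinct strings rather than letting it degenerate into case~(1) or case~(3).
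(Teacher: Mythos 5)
Your overall strategy---parabolic factorization followed by reduction to the single-string Theorem~\ref{thm:W-set inv}---is the same as the paper's, but you factor $w = vu$ with the minimal coset representative on the \emph{left}, so that in evaluating $w\cdot e$ the block-internal moves $u$ act first and the block-sorting moves $v$ act second. The paper factors the other way, $w = uv$ with the coset representative acting first, and this is not a cosmetic difference: it is exactly what removes the obstacle you flag. In the paper's order, the sorting element acts on $e$ itself, where one checks directly that the longest minimal coset representative $v_{\max}$ satisfies $v_{\max}\cdot e = v_{\max}$, and then each parabolic factor $u_i$ acts on a string whose alphabet is already the contiguous set $\{\,j : n-\nu_i < j \leq n-\nu_{i-1}\,\}$ written in increasing order, so Theorem~\ref{thm:W-set inv} applies verbatim string by string. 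In your order, you must show that every letter of a reduced word for $v$ lands in case~(2) of Definition~\ref{def:down act mu-inv} when applied to the state in which each \emph{original} string already carries its longest involution. You explicitly acknowledge that this ``requires a careful inductive analysis'' and you do not supply it. Since the case distinctions of the monoid action are state-dependent, this is not a routine verification --- it is the entire content of the forward inclusion $\mt{D}_\mu \subseteq W(\pi_{0,\mu})$ in your setup. (The claim is true and can be proved by exhibiting one convenient reduced word for $v$ and checking case~(2) letter by letter, but that argument is absent from your write-up.)

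There is a second, subtler gap in your reverse inclusion: you assert that ``since the alphabets of $\pi_{0,\mu}$ are fixed, the coset $w\mt{S}_\mu$ is determined.'' Because $w\cdot e$ is computed by the monoid action and not by group multiplication, the string alphabets of $w\cdot e$ are not simply $w$ applied to the alphabets of $e$: the letters of $w$ that trigger cases (3a)/(3b) do not move values between strings, so the set partition carried by $w\cdot e$ does not immediately determine the coset $w\mt{S}_\mu$. The paper circumvents this with its choice of factorization: the parabolic factor provably preserves string alphabets, which forces the coset-representative factor to already produce the correct alphabets, and then a length comparison ($v_{\max}$ is simultaneously the longest element of $\mt{S}^\mu$ and the $\ell_\mu$-minimizer among $\mu$-involutions with the correct alphabets) pins it down uniquely. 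You would need an analogous argument adapted to your factorization before either inclusion is complete.
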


Note that, when $\mu = (1,1,\dots, 1)$, the $W$-set of an element $\pi \in \mt{I}_\mu \cong \mt{S}_n$ is the singleton $\{ \pi \}$. 
Therefore,  the classical weak order on $\mt{S}_n$ is a special case of the weak order on $\mu$-involutions.

Finally, in Section \ref{sec:geom}, we discuss a geometric corollary and a conjecture which merit further study.  
The theorem is an immediate consequence of Theorem \ref{T:main} and \cite[Corollary 1.3]{Brion98}.

\begin{Theorem}\label{cor:geom}
Let $\ms{X}_\mu$ be the closure of the dense $B$-orbit of $\mc{O}_\mu$, $Y_{\mu}$ the closure of the 
unique closed $B$-orbit of $\mc{O}_{\mu}$, and $i^* : H^*(\ms{X}_\mu; \Z) \rightarrow H^*(G / B^-; \Z)$ 
the restriction map on cohomology induced from the natural inclusion
$i : G / B^- \hookrightarrow \ms{X}_\mu$.  Then
$$
i^* ( [Y_{\mu}] ) = 2^{\lfloor n/2 \rfloor} \sum_{w \in \mt{D}_\mu} \left[\overline{B w^{-1} B^-/B^-}\right].
$$
\end{Theorem}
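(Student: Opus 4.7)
The plan is to combine Theorem \ref{T:main} with Brion's cohomological restriction formula for $B$-orbit closures in spherical varieties \cite[Corollary 1.3]{Brion98}. The first step is to match the notations. Under the bijection between $B$-orbits in $\mc{O}_\mu$ and elements of $\mt{I}_\mu$, the dense $B$-orbit corresponds to the identity $\mu$-involution $e_\mu$, so its closure equals $\ms{X}_\mu$. Dually, the unique closed $B$-orbit of $\mc{O}_\mu$ corresponds to the maximum $\pi_{0,\mu}$ of the reverse weak order, and its closure equals $Y_\mu$. Because the weak order on $B$-orbit closures is the opposite of the reverse weak order on $\mu$-involutions, Brion's $W$-set for $Y_\mu$ coincides with the combinatorial $W$-set $W(\pi_{0,\mu})$ defined in the introduction.

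The second step is to invoke Brion's formula, which expresses $i^*[Y]$ for a $B$-stable subvariety $Y$ of a projective spherical $G$-variety as an explicit constant times $\sum_{w \in W(Y_0)} [\overline{Bw^{-1}B^-/B^-}]$, where $Y_0$ denotes the unique minimal $B$-orbit closure inside $Y$ and the constant captures the multiplicity of $Y$ along the closed $G$-orbit. Specializing to $Y = Y_\mu \subseteq \ms{X}_\mu$ and applying Theorem \ref{T:main} to replace $W(\pi_{0,\mu})$ by $\mt{D}_\mu$ produces precisely the sum appearing on the right-hand side of the stated identity.

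The main obstacle is verifying that Brion's multiplicative constant in this situation equals $2^{\lfloor n/2 \rfloor}$. This is a local computation at a $B$-fixed point of $\ms{X}_\mu \cap (G/B^-)$: the factor $2^{\lfloor n/2 \rfloor}$ arises as the index of the sublattice generated by the spherical roots of $(\mt{SL}_n, \mt{SO}_n)$ inside the corresponding character lattice, or equivalently as the number of sign choices available in diagonalizing a nondegenerate symmetric bilinear form compatible with a complete flag whose block pattern is $\mu$. Once that multiplicity is confirmed, the theorem follows by direct substitution into Brion's formula. The combinatorial part of the argument is essentially a bookkeeping exercise on top of Theorem \ref{T:main}, so the only substantive work lies in pinning down this multiplicity.
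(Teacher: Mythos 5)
Your overall strategy --- reduce to Brion's restriction formula and then substitute Theorem \ref{T:main} --- is the same as the paper's, and the bookkeeping in your first two paragraphs (identifying $Y_\mu$ with the closure of the $B$-orbit of $\pi_{0,\mu}$, reversing the order, and inserting $w^{-1}$ because chains are traversed in the opposite direction) is essentially right. The genuine gap is in the constant. Brion's coefficient is not a global invariant of the symmetric space $(\mt{SL}_n,\mt{SO}_n)$ computable as a lattice index of spherical roots: it is $2^{D(Y)}$, where $D(Y)$ is the number of \emph{double edges} (covering moves $Y' \to P_{s_i} Y'$ for which $P_{s_i}\times_B Y' \to P_{s_i}Y'$ has degree $2$) along any path from $Y$ up to $\ms{X}_\mu$ in the weak order. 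This quantity depends on $Y$ and on $\mu$, so a computation that outputs a $\mu$-independent answer cannot be the right mechanism. The paper's proof identifies the double edges combinatorially: they are exactly the moves of type 3(a) of the Richardson--Springer action, i.e.\ $\pi\mapsto s_i\pi$ with $i$ and $i+1$ fixed points lying in the same $\mu$-string, and each such move creates one new $2$-cycle inside that string. Along any maximal chain from $e$ to $\pi_{0,\mu}$ one therefore crosses exactly $\sum_{i=1}^{k}\lfloor \mu_i/2\rfloor$ double edges, giving the coefficient $2^{\sum_{i}\lfloor \mu_i/2\rfloor}$.

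A further caution: $\lfloor n/2\rfloor$ agrees with $\sum_{i}\lfloor \mu_i/2\rfloor$ only when at most one part of $\mu$ is odd (e.g.\ $\mu=(n)$). For $\mu=(1,1,\dots,1)$ the correct coefficient is $2^{0}=1$, as it must be since $Y_\mu$ is then a point of $G/B^-$ and $i^*$ is the identity. So the constant you set out to verify is not the one the paper's own derivation in Section \ref{sec:geom} actually produces for general $\mu$, which is additional evidence that the ``spherical-root-lattice index'' route would mislead you. To repair the argument, replace your third paragraph by the double-edge count described above.
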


Based on numerical evidence for $n \leq 10$, we conjecture that the class $i^*([Y_{n}])$ (coming from the 
largest $G$-orbit) factors into a product of binomials.  
More specifically, we expect that
$$
i^*([Y_{n}]) = \prod_{1 \leq i \leq j \leq n-i} (x_i + x_j) = 2^{\lfloor n/2 \rfloor} 
\prod_{i=1}^{\lfloor n/2 \rfloor} x_i \prod_{1 \leq i < j < n+1-i} (x_i + x_j).
$$

\vspace{.35in}

\noindent \textbf{Acknowledgement.}
The first author is partially supported by the Louisiana Board of Regents enhancement grant.

\section{Notation and Preliminaries}

All varieties are defined over an algebraically closed field of characteristic $\neq 2$.
Throughout, $B$ denotes the Borel subgroup of upper triangular matrices in $G=\mt{SL}_n$ and $B^{-}$ 
its opposite Borel subgroup of lower triangular matrices.

\subsection{Poset terminology}

We denote the Weyl group of $\mt{SL}_n$, the symmetric group of permutations on $n$ letters, by $\mt{S}_n$.
For $w\in \mt{S}_n$, we write its cycle representation using parentheses and its one-line notation using brackets.
For example, $(3,5)$ and $[125436]$ both denote the permutation in $\mt{S}_6$ that interchanges $3$ and $5$ while fixing the other
four numbers.

All posets are assumed to be finite and assumed to have a maximal element.
Recall that a poset $P$ with a minimal element $\hat{0}$ is {\em graded} if every maximal chain in $P$ has the same length.
We denote by $\mt{rk}: P\rightarrow \N$ the {\em rank function} on $P$ so that for $x\in P$, $\mt{rk}(x)$ is the length of a
maximal chain from $\hat{0}$ to $x$.
The {\em rank} of a graded poset $P$, denoted by $\mt{rk}(P)$, is defined to be $\mt{rk}(\hat{1})$, where $\hat{1}$ is the maximal element.

When a solvable group $B$ acts on a projective variety with finitely many orbits,
the poset consisting of irreducible $B$-stable subvarieties with respect to inclusion ordering is a graded poset \cite[Exercise 8.9.12]{RennerBook}.
A well known example is the Bruhat-Chevalley ordering on the Schubert varieties (closures of the $B$-orbits in the flag variety $\mt{SL}_n/B$).
Since Schubert varieties are indexed by the permutations, $\mt{S}_n$ acquires the Bruhat-Chevalley ordering. Furthermore, the rank function
of the induced poset structure on $\mt{S}_n$ is given by the number of inversions:
\begin{align}\label{A:length on S_n}
\mt{rk}(w)= \ell(w) = \# \{ (i,j):\ i<j\ \text{and}\ w(i) > w(j) \}.
\end{align}

An involution $w\in \mt{S}_n$ is an element of order $\leq 2$. We denote by $\mt{I}_n$ the set of involutions in $\mt{S}_n$.
It is shown in \cite{RS90} that the elements of $\mt{I}_n$ are in one-to-one correspondence with $B$-orbits in the space of 
invertible symmetric $n\times n$ matrices.
Therefore, similar to the Bruhat-Chevalley ordering on $\mt{S}_n$, there is an induced Bruhat-Chevalley ordering on involutions.
It is shown in \cite{RS94} that the opposite of the Bruhat-Chevalley order on $\mt{I}_n$ agrees with the restriction of Bruhat-Chevalley order on 
$\mt{S}_n$.

We denote by $\ell_{(n)}$ the rank function on $\mt{I}_n$.
Combinatorial properties of the opposite pair $(\mt{I}_n,\leq_B^{op})$, including the following formulation of $\ell_{(n)}$ are developed in \cite{Incitti04}:
\begin{align}
\ell_{(n)}(w) = \frac{\ell(w)+\mt{exc}(w)}{2},
\end{align}
where $\mt{exc}(w)$, the excedance of $w$, is defined by (\ref{A:length for I_mu}).
Note that the excedance of an involution is the number of 2-cycles that appear in its cycle decomposition.

\begin{Remark}
Recall that the covering relations of the weak order are covering relations for the Bruhat-Chevalley order, and therefore,
a maximal chain in a weak order is a maximal chain in the Bruhat-Chevalley order, also. We conclude from this observation that
the weak order is a graded poset and its rank function agrees with that of the Bruhat-Chevalley ordering.
\end{Remark}

\subsection{Complete Quadrics}

Let $X_0$ denote the open set of the projectivization of $\text{Sym}_n$, the space of symmetric $n \times n$ matrices, 
with non-zero determinant.

A smooth quadric hypersurface $\mc{Q}$ in $\PP^{n-1}$ is the vanishing locus of a
quadratic polynomial of the form $x^\top A x$, where $A$ is a symmetric, invertible $n\times n$ matrix
and $x$ is a column vector of variables.
The correspondence $\mc{Q} \leftrightsquigarrow A$ is unique up to scaler multiples of $A$.
There is a transitive action of $\mt{SL}_n$ on smooth quadrics induced from the action on matrices:
\begin{align}\label{A:SLnaction}
g \cdot A = \theta(g) A g^{-1},
\end{align}
where $g \in \mt{SL}_n$ and $\theta$ is the involution $\theta(g) = (g^\top)^{-1}$.
Let $\mt{SO}_n \subset \mt{SL}_n$ denote the orthogonal subgroup consisting of matrices $g \in \mt{SL}_n$ such that $(g^\top)^{-1}=g$.  
The space of smooth quadric hypersurfaces $X_0$ is identified with $\mt{SL}_n / \widetilde{\mt{SO}}_n$.

The minimal wonderful embedding of $X_0$ is the variety of complete quadrics $\ms{X}$. Concretely, a point is given by specifying a flag
\begin{equation}\label{eqn:flag}
F: 0 = V_0 \subset V_1 \subset \cdots \subset V_k = \C^n
\end{equation}
and a smooth quadric in $\PP(V_i / V_{i-1})$ for each $1 \leq i \leq k$.  The action of $\mt{SL}_n$ on $X_0$ extends to an action on $\ms{X}$
as follows.

Let $(F,Q) \in \ms{X}$ be a point given by the flag $F$ and a sequence of smooth quadrics
$Q = (Q_1,\dots, Q_k)$. Suppose $A_i$, $1\leq i \leq k$ denote the corresponding invertible symmetric matrices.
Then, for $g\in \mt{SL}_n$, $g\cdot (F,Q) = (g F, g Q)$ is given by
$$
g F: 0 = g(V_0) \subset g(V_1) \subset \cdots \subset g(V_k) = \C^n
$$
and the sequence of smooth quadrics $g Q_i$ defined by the positive definite quadratic form $(g^{\top})^{-1} A_i g^{-1}$ on $g(V_i)$
for $i=1,\dots, k$.

Let $\mc{O}_{\mu}$ be a $G$-orbit of $\ms{X}$ corresponding to the composition $\mu$.
Then $\mc{O}_{\mu}$ consists of those complete quadrics whose flag in (\ref{eqn:flag}) satisfies $\dim(V_i / V_{i-1}) = \mu_i$.
We denote by $\ms{X}_{\mu}$ the closure of $\mc{O}_{\mu}$ in $\ms{X}$.
Then $\mc{O}_{\mu'} \subseteq \ms{X}_{\mu}$ if and only if the set corresponding to the composition $\mu$ is contained in the
set corresponding to that of $\mu'$.

The $B$-orbits in $\mc{O}_{\mu}$ are parameterized by the $\mu$-involutions \cite{Springer04}.
Indeed, associated to a $\mu$-involution $\pi$ is a distinguished complete quadric $Q_{\pi}$. 
Let us explain.

Viewed as a permutation, $\pi \in \mt{I}_\mu$ has the decomposition $\pi = u v$ with $u \in \mt{S}_\mu$ and $v \in \mt{S}^\mu$,
where $\mt{S}^\mu$ is the minimal length right coset representatives of the parabolic subgroup $\mt{S}_\mu$ in
$\mt{S}_n$.
Suppose $\mu =(\mu_1,\dots, \mu_k)$ and let $e_i$ denote the $i$-th standard basis vector of $\C^n$.
Then the desired flag of $Q_{\pi}$ is given by the subspaces $V_i$, $i=1,2,\dots,k$, which are spanned by 
$e_{\pi(j)}$ for $1 \leq j \leq \mu_1 + \mu_2 + \cdots + \mu_i$.
To construct the corresponding sequence of smooth quadrics, consider $(u_1, u_2, \dots, u_k)$, the image of $u$ under the isomorphism
$\mt{S}_\mu \cong \mt{S}_{\mu_1} \times \mt{S}_{\mu_2} \times \cdots \times \mt{S}_{\mu_k}$.
Since $\pi$ is a $\mu$-involution, each $u_i \in \mt{I}_{\mu_i}$.
Then the smooth quadric in $\PP(V_i / V_{i-1})$ that defines $Q_{\pi}$ is given by the symmetric matrix in the permutation matrix
representation of $u_i$.

In a recent paper \cite{CJ11}, the authors give several different combinatorial interpretations and asymptotic estimates 
for the total number of $B$-orbits in $\ms{X}$.

\section{Weak Order on Involutions}\label{sec:weak order involutions}

When specialized to the big $G$-orbit of smooth quadrics, the Richardson-Springer monoid action 
is given by the following

\bdfn\label{def:down act inv}
Let $\pi \in \mt{I}_n$ and $s_i \in S$. Then the Richardson-Springer monoid action of $s_i$ on $\pi$ is given by
$$
s_i \cdot \pi =
\begin{cases}
\pi & \text{if}\ \pi^{-1}(i+1) < \pi^{-1}(i) \\
s_i\pi & \text{if}\ \pi(i)=i\ \text{and}\  \pi(i+1)=i+1 \\
s_i \pi s_i & \text{otherwise}
\end{cases}
$$
\edfn

\bexam
Let $\pi = (15)(27) = [5734162] \in \mt{S}_7$.  Then
\begin{align*}
s_1 \cdot \pi &= (1,7)(2,5) = [7534261] && s_4 \cdot \pi = (1,5)(2,7) = [5734162] \\
s_2 \cdot \pi &= (1,5)(2,7) = [5734162] && s_5 \cdot \pi = (1,6)(2,7) = [6734512] \\
s_3 \cdot \pi &= (1,5)(2,7)(3,4) = [5743162] && s_6 \cdot \pi = (1,5)(2,7) = [5734162].
\end{align*}
\eexam

\blem\label{lem:down act basics inv}
\benum
\item The operator $\cdot$ in Definition \ref{def:down act inv} extends to an action of $M$ on the set $\mt{I}_n$ of involutions in $\mt{S}_n$.
That is\label{lem:da 1}
    \benum
    \item $s_i \cdot (s_i \cdot \pi) = s_i \cdot \pi$;\label{lem:da 1a}
    \item $s_i \cdot (s_j \cdot \pi) = s_j \cdot (s_i \cdot \pi)\,$ if $\,|j - i| > 1$;\label{lem:da 1b}
    \item $s_i \cdot (s_{i+1} \cdot (s_i \cdot \pi)) = s_{i+1} \cdot (s_i \cdot (s_{i+1} \cdot \pi))$.\label{lem:da 1c}
    \eenum
\item If $\pi^{-1}(i+1) < \pi^{-1}(i)$, then $\ell_{(n)}(s_i \cdot \pi) = \ell_{(n)}(\pi)$.  If $\pi^{-1}(i) < \pi^{-1}(i+1)$, then
$\ell_{(n)}(s_i \cdot \pi) = \ell_{(n)}(\pi) + 1$.\label{lem:da 2}
\eenum
\elem

Because of (\ref{lem:da 1b}) and (\ref{lem:da 1c}), we define $w \cdot \pi = s_{i_1} \cdot (s_{i_2} \cdot \cdots (s_{i_k} \cdot \pi))$
for any reduced expression $w = s_{i_1} s_{i_2} \cdots s_{i_k}$.  It follows from (\ref{lem:da 2}) that
$\ell_{(n)}(w \cdot \pi) \leq \ell_{(n)}(\pi) + \ell(w)$.


\bdfn\label{def:weak order inv}

The reverse weak order on $\mt{I}_n$ is defined by $\pi \leq \pi' \Leftrightarrow \pi' = w \cdot \pi$ for some $w \in M$.
It has a unique minimal element $e = e_n = [1 \, 2 \, \dots \, n]$ and a unique maximal element
$\pi_0 = \pi_{0,n} = [n \, (n-1) \, \dots \, 1] = (1,n)(2,n-1)\cdots$.

\edfn

\brem\label{rem:rank calc}
The reverse weak order is a graded poset of rank $\ell_{(n)}(\pi_{0,n})$.  A simple calculation reveals that $\ell_{(n)}(\pi_{0,n}) = \frac{n^2}{4}$ if $n$ is even and $\ell_{(n)}(\pi_{0,n}) = \frac{n^2-1}{4}$ if $n$ is odd.  In particular, $\ell_{(n)}(\pi_{0,n}) - \ell_{(n)}(\pi_{0,n-2}) = n - 1$ for all $n > 2$.
\erem


\bdfn\label{def:W-set inv}
The $W$-set of $\pi \in \mt{I}_n$ is
$$W(\pi) := \{ w \in W : w \cdot e = \pi \text{ and } \ell(w) = \ell_{(n)}(\pi) \}.$$
\edfn

Let $\mt{D}_n$ be the set of permutations $w\in \mt{S}_n$ such that in the one line notation of $w$, $n+1-i$ appears before $i$ and
there is no number between $i$ and $n+1-i$ that appears between $n+1-i$ and $i$ in the one line notation of $w$.

A recursive description of $\mt{D}_n$ is as follows. In the one line notation of an element $w\in \mt{D}_n$, $n$ always appear to immediate
left of 1. Deleting the sequence `$n1$' from $w$ and then reducing remaining entries by one gives an element of $\mt{D}_{n-2}$.

\begin{table}[htp]
\begin{center}
\begin{tabular}{c|c}\label{Table}
$n $ & $\mt{D}_n$ \\
\hline
1 & $\{[1]\}$\\
2 & $\{[21]\}$ \\
3 & $\{[231], [312]\}$\\
4 & $\{[3241], [3412], [4132]\}$

\end{tabular}
\end{center}
\end{table}

The main result of this section is the following
\bthm\label{thm:W-set inv} The $W$-set of the longest permutation $\pi_0$ is $\mt{D}_n$. In other words,
$$W(\pi_{0,n}) = \mt{D}_n.$$
\ethm

\begin{figure}[htp]
\begin{center}

\begin{tikzpicture}[scale=.47]

\node at (0,0) (a) {$\text{id}$};

\node at (-7.5,5) (b1) {$(1,2)$};
\node at (-2.5,5) (b2) {$(2,3)$};
\node at (2.5,5) (b3) {$(3,4)$};
\node at (7.5,5) (b4) {$(4,5)$};

\node at (-12.5,10) (c1) {$(1,3)$};
\node at (-7.5,10) (c2) {$(1,2)(3,4)$};
\node at (-2.5,10) (c3) {$(1,2)(4,5)$};
\node at (2.5,10) (c4) {$(2,4)$};
\node at (7.5,10) (c5) {$(2,3)(4,5)$};
\node at (12.5,10) (c6) {$(3,5)$};

\node at (-12.5,15) (d1) {$(1,4)$};
\node at (-7.5,15) (d2) {$(1,3)(4,5)$};
\node at (-2.5,15) (d3) {$(1,3)(2,4)$};
\node at (2.5,15) (d4) {$(2,4)(3,5)$};
\node at (7.5,15) (d5) {$(1,2)(3,5)$};
\node at (12.5,15) (d6) {$(2,5)$};

\node at (-5,20) (e1) {$(1,5)$};
\node at (-10,20) (e2) {$(1,4)(2,3)$};
\node at (-0,20) (e3) {$(1,3)(2,5)$};
\node at (10,20) (e4) {$(2,5)(3,4)$};
\node at (5,20) (e5) {$(1,4)(3,5)$};

\node at (-5,25) (f1) {$(1,5)(2,3)$};
\node at (0,25) (f2) {$(1,4)(2,5)$};
\node at (5,25) (f3) {$(1,5)(3,4)$};

\node at (0,30) (g) {$(1,5)(2,4)$};

\node at (-4,2) {$\blue{s_1}$};
\node at (-1.8,2) {$\blue{s_2}$};
\node at (1.7,2) {$\blue{s_3}$};
\node at (4,2) {$\blue{s_4}$};

\node at (-9.5,6) {$\blue{s_2}$};
\node at (-8,6) {$\blue{s_3}$};
\node at (-6.2,5.6) {$\blue{s_4}$};
\node at (-3.7,6) {$\blue{s_1}$};
\node at (-2,6) {$\blue{s_3}$};
\node at (-1,5.2) {$\blue{s_4}$};
\node at (1,5.2) {$\blue{s_1}$};
\node at (2,6) {$\blue{s_2}$};
\node at (3.7,6) {$\blue{s_4}$};
\node at (6.2,5.2) {$\blue{s_1}$};
\node at (8,6) {$\blue{s_2}$};
\node at (9.5,6) {$\blue{s_3}$};

\node at (-13,11) {$\blue{s_3}$};
\node at (-10.6,11) {$\blue{s_4}$};
\node at (-7.1,11) {$\blue{s_2}$};
\node at (-5,10.4) {$\blue{s_4}$};
\node at (-2.5,10.8) {$\blue{s_2}$};
\node at (-0.5,10.4) {$\blue{s_3}$};
\node at (1.5,10.8) {$\blue{s_1}$};
\node at (3.3,10.6) {$\blue{s_4}$};
\node at (5.5,10.4) {$\blue{s_1}$};
\node at (7.2,10.8) {$\blue{s_3}$};
\node at (10.7,11) {$\blue{s_1}$};
\node at (13,11) {$\blue{s_2}$};

\node at (-12.7,16) {$\blue{s_4}$};
\node at (-10.5,15.6) {$\blue{s_2}$};
\node at (-6.2,15.6) {$\blue{s_3}$};
\node at (-3.7,15.8) {$\blue{\{s_1,s_3\}}$};
\node at (-1.5,15.8) {$\blue{s_4}$};
\node at (2.5,16) {$\blue{s_1}$};
\node at (4.2,15.9) {$\blue{\{s_2,s_4\}}$};
\node at (7,15.7) {$\blue{s_2}$};
\node at (10.8,16) {$\blue{s_1}$};
\node at (12.6,16) {$\blue{s_3}$};

\node at (-9.8,21) {$\blue{s_4}$};
\node at (-5.8,21) {$\blue{s_2}$};
\node at (-3.2,20.4) {$\blue{s_3}$};
\node at (-1.2,20.8) {$\blue{s_1}$};
\node at (0.4,21) {$\blue{s_3}$};
\node at (3.5,20.8) {$\blue{s_2}$};
\node at (5.4,21) {$\blue{s_4}$};
\node at (9.6,21) {$\blue{s_1}$};

\node at (-4.8,26) {$\blue{s_3}$};
\node at (-0.15,26) {$\blue{\{s_1,s_4\}}$};
\node at (4.8,26) {$\blue{s_2}$};

\draw[-, very thick, double]  (a) to (b1);
\draw[-, very thick, double]  (a) to (b2);
\draw[-, very thick, double] (a) to (b3);
\draw[-, very thick, double] (a) to (b4);

\draw[-, very thick] (b1) to (c1);
\draw[-, very thick, double] (b1) to (c2);
\draw[-, very thick, double] (b1) to (c3);
\draw[-, very thick] (b2) to (c1);
\draw[-, very thick] (b2) to (c4);
\draw[-, very thick, double] (b2) to (c5);
\draw[-, very thick, double] (b3) to (c2);
\draw[-, very thick] (b3) to (c4);
\draw[-, very thick] (b3) to (c6);
\draw[-, very thick, double] (b4) to (c3);
\draw[-, very thick, double] (b4) to (c5);
\draw[-, very thick] (b4) to (c6);

\draw[-, very thick] (c1) to (d1);
\draw[-, very thick, double] (c1) to (d2);
\draw[-, very thick] (c2) to (d3);
\draw[-, very thick] (c2) to (d5);
\draw[-, very thick] (c3) to (d2);
\draw[-, very thick] (c3) to (d5);
\draw[-, very thick] (c4) to (d1);
\draw[-, very thick] (c4) to (d6);
\draw[-, very thick] (c5) to (d2);
\draw[-, very thick] (c5) to (d4);
\draw[-, very thick, double] (c6) to (d5);
\draw[-, very thick] (c6) to (d6);

\draw[-, very thick] (d1) to (e1);
\draw[-, very thick, double] (d1) to (e2);
\draw[-, very thick] (d2) to (e5);
\draw[-, very thick] (d3) to (e2);
\draw[-, very thick] (d3) to (e3);
\draw[-, very thick] (d4) to (e4);
\draw[-, very thick] (d4) to (e5);
\draw[-, very thick] (d5) to (e3);
\draw[-, very thick] (d6) to (e1);
\draw[-, very thick, double] (d6) to (e4);

\draw[-, very thick, double] (e1) to (f1);
\draw[-, very thick, double] (e1) to (f3);
\draw[-, very thick] (e2) to (f1);
\draw[-, very thick] (e3) to (f1);
\draw[-, very thick] (e3) to (f2);
\draw[-, very thick] (e4) to (f3);
\draw[-, very thick] (e5) to (f2);
\draw[-, very thick] (e5) to (f3);

\draw[-, very thick] (f1) to (g);
\draw[-, very thick] (f2) to (g);
\draw[-, very thick] (f3) to (g);

\end{tikzpicture}

\caption{Reverse weak order on $\mt{I}_5$.}
\label{fig:inv 5}

\end{center}
\end{figure}


Before proving the theorem, we first state and prove two simple computational lemmas that are needed in the proof.

\blem\label{lem:w set lem 1}
\benum
\item Suppose that $1 \leq j < i \leq n$ and let 
$$v = (s_{n-1} s_{n-2} \cdots s_{j+1})(s_1 s_2 \cdots s_{i-1}).$$\label{lem:w set lem 1-1}
\benum
\item If $i = j+1$, then $v \cdot e = (1,n)$ and $\ell_{(n)}(v \cdot e) = \ell(v)$.\label{lem:w set lem 1-1a}
\item If $i = j+2$, then $v \cdot e = (1,n)$ and $\ell_{(n)}(v \cdot e) = \ell(v)-1$.\label{lem:w set lem 1-1b}
\item If $i > j+2$, then $v \cdot e = (1,n)(j+1,i-1)$ and $\ell_{(n)}(v \cdot e) = \ell(v)-1$.\label{lem:w set lem 1-1c}
\eenum
\item Suppose that $1 \leq i < j \leq n$ and let $$v = (s_{n-1} s_{n-2} \cdots s_j)(s_1 s_2 \cdots s_{i-1}).$$
Then $v \cdot e = (1,i)(j,n)$ and $\ell_{(n)}(v \cdot e) = \ell(v)$.\label{lem:w set lem 1-2}
\eenum
\elem

\begin{proof}
We prove part (\ref{lem:w set lem 1-1c}), the proofs of the other statements being similar and left to the reader.
First, a simple inductive argument shows that $$(s_1 s_2 \cdots s_{i-1}) \cdot e = (1,i).$$
Since $i > j+2$, $$s_{j+1} \cdot (1,i) = (1,i)(j+1,j+2).$$
Another straightforward inductive argument shows that
$$(s_{i-2} s_{i-3} \cdots s_{j+1}) \cdot (1,i) = (1,i)(j+1,i-1).$$  Now the key observation is that $i$ occurs before $i-1$ in the
one-line notation of $(1,i)(j+1,i-1)$ (indeed $i$ occurs in the very first position), so
$$s_{i-1} \cdot (1,i)(j+1,i-1) = (1,i)(j+1,i-1).$$  From that point on, a third simple induction gives
$$(s_n s_{n-1} \cdots s_{i}) \cdot (1,i)(j+1,i-1) = (1,n) (j+1,i-1).$$  In each step, $\ell$ and $\ell_{(n)}$
both increase by one, except for the key step, where the length $\ell$ increases by one
while $\ell_{(n)}$ remains the same.  This proves the second statement.
\end{proof}

\blem\label{lem:w set lem 2}
Suppose that $\pi, \pi' \in \mt{I}_n$ are involutions such that the cycle $(1,n)$ does not occur in $\pi$ but does occur in $\pi'$.
If $\pi' = s \cdot \pi$ for some $s \in S$, then $s = s_1$ or $s = s_{n-1}$.
\elem

\begin{proof}
Looking at Definition \ref{def:down act inv}, the only way for $\pi'$ to contain $(1 \,\, n)$ when $\pi$ does not is if $\pi$ contains either
$(1,n-1)$ or $(2,n)$.  In either case, only the action of $s_1$ or $s_{n-1}$ can produce the cycle $(1,n)$.
\end{proof}


\begin{proof}[Proof of Theorem \ref{thm:W-set inv}]
The proof is by induction on $n$, the cases $n=1$ and $n=2$ being trivial.  Let $w \in W(\pi_{0,n})$ and $i = w^{-1}(1)$, $j = w^{-1}(n)$.
We consider two cases.

The first case is $i > j$.  Set
\begin{equation*}\label{eqn:v def}
v = (s_{n-1} s_{n-2} \cdots s_{j+1})(s_1 s_2 \cdots s_{i-1}),
\end{equation*}
and note that $\ell(v) = n + i - j - 2 \geq n - 1$.
Furthermore, $w' = w v$ has length $\ell(w') = \ell(w) - \ell(v)$.  Then $w'(1) = 1$, $w'(n) = n$ and
the one-line notation for $w'$ is obtained from that of $w$ by moving $1$ to the front and $n$ to the end.
For example, if $w = [3 5 6 1 4 2]$, then $v = s_5 s_4 s_1 s_2 s_3$ and $w' = [1 3 5 4 2 6]$.
Moreover $w \cdot e$ is given by Lemma \ref{lem:w set lem 1}(\ref{lem:w set lem 1-1}).  
Using the length calculations of Lemma \ref{lem:w set lem 1}(\ref{lem:w set lem 1-1}), we must have $i = j + 1$ in order for $w$ to be in $W(\pi_{0,n})$.

Consider $\mt{S}_{n-2}$ (resp., $\mt{I}_{n-2}$) as the permutations (resp., involutions) of the set $\{2,3,\dots,n-1\}$, 
or equivalently of the set $\{1,2,\dots,n\}$ that fix $1$ and $n$.
Then $w' \in \mt{S}_{n-2}$ and $w \cdot e = \pi_{0,n} \Leftrightarrow w' \cdot e = \pi_{0,n-2}$.  
Indeed, the condition on the right is equivalent to $w' \cdot (1,n) = (1,n)\pi_{0,n-2} = \pi_{0,n}$.
Since $\ell(w') = \ell(w) - \ell(v) = \ell_{(n)}(\pi_{0,n}) - (n-1) = \ell_{(n-2)}(\pi_{0,n-2})$ (see Remark \ref{rem:rank calc}), 
$w' \in W(\pi_{0,n-2})$ and, by the induction hypothesis, $w' \in \mt{D}_{n-2}$.  It follows that $w \in \mt{D}_n$.

The second case is $i < j$.  Set
\begin{equation*}
v = (s_{n-1} s_{n-2} \cdots s_j)(s_1 s_2 \cdots s_{i-1}).
\end{equation*}
As before, $w = w' v$ with $\ell(w) = \ell(w') + \ell(v)$, $w'(1) = 1$ and $w'(n) = n$.
Additionally, by Lemma \ref{lem:w set lem 1}(\ref{lem:w set lem 1-2}), $v \cdot e = (1,i)(j,n)$.
Since $w' \in \mt{S}_{n-2}$, any reduced decomposition of $w'$ uses only the transpositions $s_2, s_3, \dots, s_{n-2}$.
Then, by Lemma \ref{lem:w set lem 2}, $w \cdot e = w' \cdot (1,i)(j,n)$ cannot contain the cycle $(1,n)$.  
Therefore, $w \cdot e \neq \pi_{0,n}$ and $w \notin W(\pi_{0,n})$.
\end{proof}

\bcor
The maximal chains of the induced partial order on $\mc{I}_n$ are parameterized by the reduced expressions of the elements of $\mt{D}_n$.
\ecor


\section{Weak Order on $\mu$-involutions}\label{sec:weak order mu-strings}

Fix a positive integer $n$ and let $\mu = (\mu_1, \mu_2, \dots, \mu_k)$ be a composition of $n$.
Let $\nu_j = \sum_{i = 1}^j \mu_i$.  By convention, set $\nu_0 = 0$.
Given a permutation $\pi \in \mt{S}_n$, define the $i$-th $\mu$-string of $\pi$ to be $\alpha_i = [\pi(\nu_{i-1} + 1), \pi(\nu_{i-1} + 2), \dots, \pi(\nu_i)]$.  We write $\pi = [\alpha_1 | \alpha_2 | \cdots | \alpha_k]$ to indicate the $\mu$-strings of $\pi$.

\begin{Convention}\label{conv:string perm}
Given any string $\alpha$ containing each element of an alphabet $\mathbb{A} \subset [n]$ in exactly one position,
we interpret $\alpha$ as the one-line notation of a permutation of $\mathbb{A}$.  (We order $\mathbb{A}$ in increasing order.)
For example, the string $\alpha = [5264]$ is interpreted as the permutation $2 \mapsto 5, 4 \mapsto 2, 5 \mapsto 6, 6 \mapsto 4$.
\end{Convention}

Note that a permutation $\pi \in \mt{S}_n$ is a $\mu$-involution if each of the permutations associated to the $\mu$-strings of $\pi$ is an involution.
Let $\mt{I}_{\mu}$ denote the set of $\mu$-involutions.

\brem\label{rem:n-involutions are involutions}
When $\mu$ is the composition consisting of the single part $n$, then a $\mu$-involution is an ordinary involution in $\mt{S}_n$.
Thus, the notation $\mt{I}_n$ is unambiguous.
\erem

Recall the action of $M(\mt{S}_n)$ on $\mt{I}_\mu$:
\bdfn\label{def:down act mu-inv}
If $\pi^{-1}(i) > \pi^{-1}(i+1)$, then $s_i \cdot \pi = \pi$.  If $\pi^{-1}(i) < \pi^{-1}(i+1)$ and the values $i$ and $i+1$ occur
in different $\mu$-strings of $\pi$, then $s_i \cdot \pi = s_i \pi$.  Otherwise, if $\pi^{-1}(i) < \pi^{-1}(i+1)$ and the values
$i$ and $i+1$ occur in the same $\mu$-string $\alpha_j$ of $\pi$, then the action splits into two further subcases.
If $\alpha_j$ fixes both $i$ and $i+1$ (when the string $\alpha_j$ is viewed as a permutation as in Convention \ref{conv:string perm}),
then $s_i \cdot \pi = [\alpha_1| \dots| \alpha_{j-1}| s_i \alpha_{j}| \alpha_{j+1}| \dots| \alpha_k]$.
Otherwise, $s_i \cdot \pi = [\alpha_1| \dots| \alpha_{j-1}| s_i \alpha_j s_i| \alpha_{j+1}| \dots |\alpha_k]$.
\edfn

\blem\label{lem:down act basics mu-inv}
\benum
\item The operator $\cdot$ in Definition \ref{def:down act mu-inv} extends to the Richardson-Springer monoid of $\mt{S}_n$.
That is\label{lem:da mu 1}
    \benum
    \item $s_i \cdot (s_i \cdot \pi) = s_i \cdot \pi$;\label{lem:da mu 1a}
    \item $s_i \cdot (s_j \cdot \pi) = s_j \cdot (s_i \cdot \pi)\,$ if $\,|j - i| > 1$;\label{lem:da mu 1b}
    \item $s_i \cdot (s_{i+1} \cdot (s_i \cdot \pi)) = s_{i+1} \cdot (s_i \cdot (s_{i+1} \cdot \pi))$.\label{lem:da mu 1c}
    \eenum
\item If $\pi^{-1}(i) > \pi^{-1}(i+1)$, then $\ell_\mu(s_i \cdot \pi) = \ell_\mu(\pi)$.  If $\pi^{-1}(i) < \pi^{-1}(i+1)$,
then $\ell_{\mu}(s_i \cdot \pi) = \ell_{\mu}(\pi) + 1$.\label{lem:da mu 2}
\eenum
\elem


\bdfn\label{def:weak order mu-inv}

The {\em reverse weak order} on $\mt{I}_{\mu}$ is defined by $\pi \leq \pi' \Leftrightarrow \pi' = w \cdot \pi$ for some $w \in M$.
It has a unique minimal element $e = [1 \,\, 2 \dots n]$ and a unique maximal element $\pi_{0,\mu}$.
The maximal element $\pi_{0,\mu}$ is characterized by the fact that the every element of the alphabet of the $i$-th $\mu$-string
is larger than every element of the $(i+1)$-st $\mu$-string and the permutation associated to any
$\mu$-string is the usual longest element which reverses the order of the numbers.  For example, $\pi_{0,(4,2)} = [6543 | 21]$.

\edfn

The reverse weak order on $\mt{I}_{(3,1)}$ is shown in Figure \ref{fig:mu-inv 3,1}.  
(See Section \ref{sec:geom} for an explanation of the double edges.)
The 11 maximal chains correspond to the reduced decompositions of the two permutations $[342|1]$ and $[423|1]$ which make up
$\mt{D}_{(3,1)}$ (see Definition \ref{def:D-set mu-inv} and Theorem \ref{thm:W-set mu-inv}).

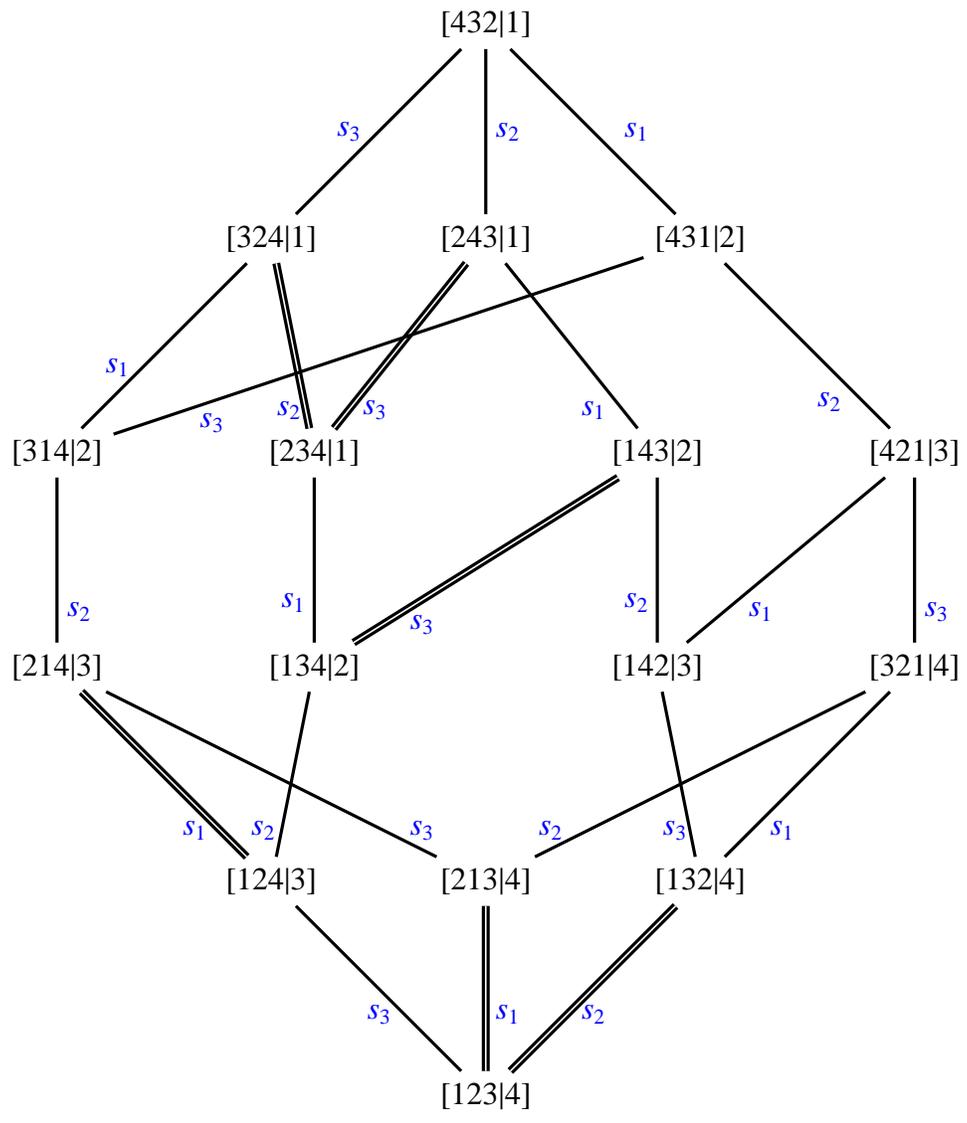
\begin{figure}[htp]
\begin{center}

\begin{tikzpicture}[scale=.57]

\node at (0,0) (a) {$[432|1]$};

\node at (-5,-5) (b1) {$[324|1]$};
\node at (0,-5) (b2) {$[243|1]$};
\node at (5,-5) (b3) {$[431|2]$};

\node at (-10,-10) (c1) {$[314|2]$};
\node at (-4,-10) (c2) {$[234|1]$};
\node at (4,-10) (c3) {$[143|2]$};
\node at (10,-10) (c4) {$[421|3]$};

\node at (-10,-15) (d1) {$[214|3]$};
\node at (-4,-15) (d2) {$[134|2]$};
\node at (4,-15) (d3) {$[142|3]$};
\node at (10,-15) (d4) {$[321|4]$};

\node at (-5,-20) (e1) {$[124|3]$};
\node at (0,-20) (e2) {$[213|4]$};
\node at (5,-20) (e3) {$[132|4]$};

\node at (0,-25) (g) {$[123|4]$};

\node at (-3.2,-2.5) {$\blue{s_3}$};
\node at (3.5,-2.5) {$\blue{s_1}$};
\node at (.5,-2.5) {$\blue{s_2}$};

\node at (-8.6,-8) {$\blue{s_1}$};
\node at (-6.4,-9.3) {$\blue{s_3}$};

\node at (-4.6,-9) {$\blue{s_2}$};
\node at (-2.6,-9) {$\blue{s_3}$};
\node at (2.5,-9) {$\blue{s_1}$};
\node at (8,-8.8) {$\blue{s_2}$};

\node at (-9.5,-13.7) {$\blue{s_2}$};
\node at (-4.5,-13.5) {$\blue{s_1}$};
\node at (-1.5,-14) {$\blue{s_3}$};
\node at (3.5,-13.5) {$\blue{s_2}$};
\node at (6.4,-13.7) {$\blue{s_1}$};
\node at (10.5,-13.7) {$\blue{s_3}$};

\node at (-6.8,-18.8) {$\blue{s_1}$};
\node at (-5.2,-18.8) {$\blue{s_2}$};
\node at (-1.5,-18.8) {$\blue{s_3}$};
\node at (1.5,-18.8) {$\blue{s_2}$};
\node at (4.4,-18.8) {$\blue{s_3}$};
\node at (6.9,-18.8) {$\blue{s_1}$};

\node at (-2.5,-23.1) {$\blue{s_3}$};
\node at (2.5,-23.1) {$\blue{s_2}$};
\node at (.5,-23.1) {$\blue{s_1}$};

\draw[-, very thick]  (a) to (b1);
\draw[-, very thick]  (a) to (b2);
\draw[-, very thick] (a) to (b3);

\draw[-, very thick] (b1) to (c1);
\draw[-, very thick, double] (b1) to (c2);
\draw[-, very thick, double] (b2) to (c2);
\draw[-, very thick] (b2) to (c3);
\draw[-, very thick] (b3) to (c1);
\draw[-, very thick] (b3) to (c4);

\draw[-, very thick] (c1) to (d1);
\draw[-, very thick] (c2) to (d2);
\draw[-, very thick, double] (c3) to (d2);
\draw[-, very thick] (c3) to (d3);
\draw[-, very thick] (c4) to (d3);
\draw[-, very thick] (c4) to (d4);

\draw[-, very thick, double] (d1) to (e1);
\draw[-, very thick] (d1) to (e2);
\draw[-, very thick] (d2) to (e1);
\draw[-, very thick] (d3) to (e3);
\draw[-, very thick] (d4) to (e3);
\draw[-, very thick] (d4) to (e2);

\draw[-, very thick] (e1) to (g);
\draw[-, very thick, double] (e2) to (g);
\draw[-, very thick, double] (e3) to (g);

\end{tikzpicture}

\caption{Reverse weak order on $\mt{I}_{3,1}$.}
\label{fig:mu-inv 3,1}

\end{center}
\end{figure}

\begin{Lemma}\label{def:quad length}
Let $\mt{S}_\mu$ denote the parabolic subgroup corresponding to $\mu$.
Then the rank function $\ell_\mu$ on $\mt{I}_\mu$ is given by
\begin{align}\label{A:length in I_mu}
\ell_\mu(\pi) := \min_{w \in \mt{S}_\mu} \ell(w \pi) + \sum_{i=1}^k \frac{\ell(\alpha_i) + \text{exc}(\alpha_i)}{2},
\end{align}
where $\pi = [\alpha_1 | \dots | \alpha_k]\in \mt{I}_\mu$.
\end{Lemma}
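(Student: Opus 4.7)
The strategy is to verify that the right-hand side, denoted $f(\pi)$, satisfies the same recursion as the rank function $\ell_\mu$: $f(e) = 0$ and $f(s_i \cdot \pi) = f(\pi) + 1$ whenever $\pi^{-1}(i) < \pi^{-1}(i+1)$. By Lemma \ref{lem:down act basics mu-inv}(2) the rank function $\ell_\mu$ satisfies the same recursion, and since every $\mu$-involution is reached from $e$ by iterated monoid actions, this forces $f = \ell_\mu$. Set $A(\pi) := \min_{w \in \mt{S}_\mu} \ell(w\pi)$ and $B(\pi) := \sum_{j=1}^{k} \frac{\ell(\alpha_j) + \mt{exc}(\alpha_j)}{2}$, so $f = A + B$. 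The base case $f(e) = 0$ holds because every string of $e$ is the identity and $e$ minimizes $\ell$ in its $\mt{S}_\mu$-coset.

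The key preliminary is that $A(\pi)$ equals the number of \emph{inter-block inversions} of $\pi$ -- pairs of positions $p_1 < p_2$ lying in distinct $\mu$-blocks of positions with $\pi(p_1) > \pi(p_2)$ -- since the minimum in the definition of $A$ is attained by sorting the values within each block of positions, which removes exactly the intra-block inversions. Consequently, $A$ is invariant under replacing $\pi$ by $\pi \sigma$ for any $\sigma \in \mt{S}_\mu$.

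Next, I handle the three cases of Definition \ref{def:down act mu-inv}. If $i$ and $i+1$ lie in different $\mu$-strings, then $s_i \cdot \pi = s_i \pi$ swaps the values $i$ and $i+1$. Because any other value $c$ satisfies $c < i$ or $c > i+1$ and hence compares identically with both, the only pair whose inversion status can change is $(\pi^{-1}(i), \pi^{-1}(i+1))$; this pair is inter-block by hypothesis and switches from non-inversion to inversion, so $A$ grows by $1$. The two affected strings exchange one of $\{i, i+1\}$ in their alphabets, a rank-preserving relabeling (since $i$ and $i+1$ are consecutive integers), so each string represents the same permutation under the sorted-rank bijection and $B$ is unchanged. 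If instead $i, i+1$ lie in the same string $\alpha_j$, the action modifies only $\alpha_j$ while preserving its multiset of values; hence $s_i \cdot \pi$ lies in the same coset as $\pi$, so $A$ is unchanged, and the task becomes showing $B$ grows by $1$. If $\alpha_j$ fixes both $i, i+1$, then $\alpha_j \mapsto s_i \alpha_j$ inserts the $2$-cycle $(i, i+1)$, adding exactly one inversion and one excedance. Otherwise $\alpha_j \mapsto s_i \alpha_j s_i$ is conjugation, which preserves cycle type (hence $\mt{exc}$); the task reduces to showing $\ell(s_i \alpha_j s_i) = \ell(\alpha_j) + 2$. This is done by applying the standard length identity for left-multiplication by $s_i$ to $\alpha_j$ and then for right-multiplication by $s_i$ to $s_i \alpha_j$: the hypothesis $\pi^{-1}(i) < \pi^{-1}(i+1)$ gives $\alpha_j^{-1}(i) < \alpha_j^{-1}(i+1)$, so $\ell(s_i \alpha_j) = \ell(\alpha_j) + 1$; and in each of the three Case-C subcases (which exclude $(i,i+1)$ being a $2$-cycle of $\alpha_j$, again ruled out by the hypothesis), a direct computation verifies $(s_i \alpha_j)(i) < (s_i \alpha_j)(i+1)$, whence $\ell(s_i \alpha_j s_i) = \ell(s_i \alpha_j) + 1$.

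The main obstacle is the verification of $(s_i \alpha_j)(i) < (s_i \alpha_j)(i+1)$ across the Case-C subcases, which amounts to a short consideration of whether each of $i$ and $i+1$ is fixed or partnered with some other alphabet element in a $2$-cycle. Once this is complete, the recursion $f(s_i \cdot \pi) = f(\pi) + 1$ holds in all three cases, and $f = \ell_\mu$.
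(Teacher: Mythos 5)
Your proof is correct, and it takes a genuinely different route from the paper's. The paper's argument is global and very short: since the reverse weak order is graded, $\ell_\mu(\pi)$ equals the length of any single maximal chain from $e$ to $\pi$, and such a chain is exhibited in two phases --- first the inter-block moves that place each value into the block it occupies in $\pi$ (contributing the coset-minimum term), then the intra-block moves that build up each string's involution (contributing $\sum_i (\ell(\alpha_i)+\mt{exc}(\alpha_i))/2$ via the rank formula for $\mt{I}_{\mu_i}$ quoted from Incitti). You instead verify locally that the right-hand side vanishes at $e$ and increases by exactly $1$ under every non-trivial generator action, which forces it to coincide with the rank function. This costs you the three-case analysis (in particular the check that $\ell(s_i\alpha_j s_i)=\ell(\alpha_j)+2$ and the one-inversion/one-excedance increment in the fixed-point case), but it buys a self-contained argument that re-derives the stringwise rank formula rather than citing it, and that establishes, rather than assumes, the compatibility of the displayed formula with gradedness. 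One thing you should make explicit: you interpret the first term as the number of inversions of $\pi$ occupying positions in distinct $\mu$-blocks, i.e.\ as the minimum of $\ell$ over the coset $\pi\mt{S}_\mu$ (``sorting the values within each block of positions'' is right multiplication by $\mt{S}_\mu$ under the usual right-to-left composition convention, not left). That is the reading under which the identity actually holds --- for $\mu=(2,1)$ and $\pi=[31|2]$ the rank is $2$ and the string term is $1$, so the first term must equal $1$, which is the minimum over $\pi\mt{S}_\mu$, whereas minimizing $\ell(w\pi)$ with $w$ acting on values would give $2$ --- so state the convention rather than passing between the two silently.
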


\begin{proof}
The value of the rank function $\ell_\mu(\pi)$ is equal to the
length of a maximal chain from $e$ to $\pi$. We construct such a chain by the action of $M(\mt{S}_n)$ in two steps.
First, we permute the entries of $\pi$ in such a way that if $n - \nu_i < j \leq n - \nu_{i-1}$, then $j$ appears in the $i$-th $\mu$ string.
The minimal number of steps (simple tranpositions to act with) required for this rearrangement is $\min_{w \in \mt{S}_\mu} \ell(w \pi) $.
In the second step we reorder the entries of each $\mu$-string. The number of steps required is the summation on the right hand side
of (\ref{A:length in I_mu}).
\end{proof}

\bdfn\label{def:W-set mu-inv}
Let $\pi \in \mt{I}_{\mu}$.  The $W$-set of $\pi$ is
$$
W(\pi) := \{ w \in \mt{S}_n : w \cdot e = \pi \text{ and } \ell(w) = \ell_\mu(\pi) \}.
$$
\edfn

\bdfn\label{def:D-set mu-inv}
Define $\mt{D}_{\mu}$ to consist of permutations $w \in \mt{S}_n$ whose $i$-th $\mu$-string uses the alphabet consisting of
integers $j$ such that $n - \nu_i < j \leq n - \nu_{i-1}$ and has its associated permutation in $\mt{D}_{\mu_i}$.
\edfn

\bexam\label{ex:D-set mu-inv}
$\mt{D}_{(4,2)} = \{ [5463|21], [5634|21], [6354|21] \}$.
\eexam

\bthm\label{thm:W-set mu-inv}
$$W(\pi_{0,\mu}) = \mt{D}_{\mu}.$$
\ethm

\begin{proof}
Recall that $\mt{S}_\mu \cong \mt{S}_{\mu_1} \times \mt{S}_{\mu_2} \times \cdots \times \mt{S}_{\mu_k}$ 
is the parabolic subgroup generated by $s_i$ for $i \in I$ and $\mt{S}^\mu$ is the set of minimal length {\em right} coset representatives.  
Suppose $w \in W(\pi_{0,\mu})$ and write
$w = uv$ with $u \in \mt{S}_\mu$ and $v \in \mt{S}^\mu$.

We first claim that $v \cdot e$ must result in a $\mu$-involution such that
\begin{equation*}\label{eqn:mu-string cond}\tag{*}
\text{the } i\text{-th}\ \mu\text{-string consists of integers } j \text{ such that } n - \nu_i < j \leq n - \nu_{i-1}.
\end{equation*}
Indeed, we may write $u = u_1 u_2 \cdots u_k$ with each $u_i \in \mt{S}_{\mu_i} \subset \mt{S}_{\mu}$.  
The $u_i$'s commute with each other and the effect of $u_i$ on any $\mu$-involution is to only change the $i$-th $\mu$-string.  
In particular, if $\pi$ is any $\mu$-involution, the alphabets of each $\mu$-string must be the same for $\pi$ and $u \cdot \pi$.  
Since $\pi_{0,\mu}$ satisfies \eqref{eqn:mu-string cond}, so must $v \cdot e$.

Consider $v_{\max}$, the element of $S^{\mu}$ of maximal length.  Explicitly, $v_{\max}$ satisfies \eqref{eqn:mu-string cond} 
and each $\mu$-string is written in increasing order.  
At the same time, $v_{\max}$ is a $\mu$-involution and it has the minimal value of $\ell_{\mu}$ among all $\mu$-involutions 
satisfying \eqref{eqn:mu-string cond}.  Moreover, $v_{\max} \cdot e = v_{\max}$ and $\ell_{\mu}(v_{\max}) = \ell(v_{\max})$.  
Therefore, $v \cdot e$ cannot satisfy \eqref{eqn:mu-string cond} unless $v = v_{\max}$.

Then $u \cdot v_{\max} = \pi_{0,\mu} \Leftrightarrow u_i \cdot e = \pi_{0,\mu_i}$ for each $1 \leq i \leq k$.  
Here, we are interpreting $u_i$ as a permutation of the set of integers $j$ with $n - \nu_i < j \leq n - \nu_{i-1}$.  
In order for $w \cdot e = \pi_{0,\mu}$ with $\ell(w)$ minimal, each $u_i$ must be an element of $W(\pi_{0,\mu_i}) = \mt{D}_{\mu_i}$, 
shifted appropriately so as to become a permutation of the alphabet of the $i$-th $\mu$-string.  
The definition of $\mt{D}_{\mu}$ is equivalent to the set of elements $u_1 u_2 \cdots u_k v_{\max}$ so obtained.

\end{proof}

\section{Geometric Consequences}\label{sec:geom}

We briefly describe the geometric implications of our theorems.  
The closed orbit of $\ms{X}_n$, corresponding to the partition $\mu = (1,1,\dots,1)$ ($n$ $1$'s) is isomorphic to the complete 
flag variety $G / B^-$ and the inclusion $i : G / B^{-} \hookrightarrow \ms{X}$ induces a restriction map on the cohomology 
$i^* : H^*(\ms{X};\Z) \rightarrow H^*(G/B^{-};\Z)$.  For each $B$-stable subvariety $Y \subseteq \ms{X}$, 
it is natural to ask for a description of the class $i^*([Y])$ in the Schubert basis of $H^*(G/B^{-};\Z)$.  
Brion gives such a description in terms of the weak order on $B(\ms{X})$, the collection of $B$-stable subvarieties of $\ms{X}$.  
Given $w \in S_n$ and any reduced decomposition $w = s_{i_1} s_{i_2} \cdots s_{i_l}$, $w \cdot Y$ is defined to be 
$P_{s_{i_1}} P_{s_{i_2}} \cdots P_{s_{i_l}} Y$, where $P_{s_j} = BW_{s_j}B$ is the minimal parabolic subgroup associated to $s_j$.  
A covering relation $s_i \cdot Y = Y'$ is depicted by drawing a directed edge from $Y$ to $Y'$ with label $s_i$. 
The edge is single (resp., double) if the morphism $P_{s_i} \times_B Y \rightarrow Y'$ has degree $1$ (resp., $2$).

If the smallest $G$-stable subvariety of $\ms{X}$ containing $Y$ is $\ms{X}_{\mu}$, the $W$-set of $Y$ is defined to be
$$
W(Y) = \{ w \in S_n : w \cdot Y = \ms{X}_{\mu} \text{ and } \ell(w) = \text{codim}(Y,\ms{X}_{\mu}) \}.
$$
Associated to each path in the associated weak order of $B(\ms{X})$ is the number of double edges in the path.  
In \cite{Brion98}, Brion shows that the number $D(Y)$ of such double edges is the same for any path from $Y$ to $\ms{X}_{\mu}$ and that
$$i^*(Y) = 2^{D(Y)} \sum_{w \in W(Y)} [B w B^{-} / B^{-}].$$

If $Y = Y_{\pi}$ is associated to the $\mu$-involution $\pi$, then chains in the weak order from $Y_{\pi}$ to $\ms{X}_{\mu}$ 
correspond to chains in the reverse weak order from $e$ to $\pi$.  Moreover, a double edge in the reverse weak order graph occurs 
if and only $\pi \rightarrow s_i \cdot \pi = s_i \pi$ corresponds to the case where $i$ and $i+1$ are contained in the same $\mu$-string and 
$\pi$ fixes both $i$ and $i+1$.  Such double edges correspond to adding a $2$-cycle in the cycle decomposition 
of the corresponding $\mu$-string.  Hence $D(\pi) := D(Y_{\pi})$ is equal to the number of involutions occurring in the $\mu$-strings of $\pi$.

Therefore, when $\pi = \pi_{0,\mu}$, 
$$
D(\pi_{0,\mu}) = \sum_{i=1}^k \lfloor \frac{\mu_i}{2} \rfloor.
$$
Moreover, maximal chains from $e$ to $\pi$ correspond to chains from $Y_{\pi}$ to $\ms{X}_{\mu}$ traversed in the opposite direction, 
so $W(Y_{\pi}) = W(\pi)^{-1}$.
It follows from Theorems \ref{thm:W-set inv} and \ref{thm:W-set mu-inv} that
$$
i^*([Y_{\pi_{0,\mu}}]) = 2^{D(\pi_{0,\mu})} \sum_{w \in \mt{D}_{\mu}} \left[ B w^{-1} B^{-} / B^{-} \right].
$$

Standard properties of geometric divided difference operators allow one to compute general classes $i^*([Y])$ 
analogous to how one computes arbitrary Schubert polynomials knowing the formula 
$\mathfrak{S}_{w_{0,n}} = x_1^{n-1} x_2^{n-2} \cdots x_{n-1}$.

In the case of ordinary involutions, based on numerical evidence for $n \leq 10$ and some multidegree calculations, 
we conjecture that $i^*([Y_{\pi_{0,n}}])$ also has a similar factorization:
$$
i^*([Y_{\pi_{0,n}}]) = \prod_{1 \leq i \leq j \leq n-i} (x_i + x_j),
$$
where $x_i$ is the first Chern class of the dual of the $i$-th quotient line bundle in the tautological flag on $G / B^{-}$.
Viewed in an appropriate geometric context, we hope that such a ``product = sum'' formula can be proved uniformly in $n$, 
thereby yielding rather interesting factorizations of certain sums of Schubert polynomials.  We plan to return to this idea in future work.

\bibliography{References}
\bibliographystyle{plain}

\end{document}